\DeclareMathOperator{\Spec}{Spec}
\DeclareMathOperator{\high}{high}
\DeclareMathOperator{\medium}{medium}
\DeclareMathOperator{\homog}{homog}
\DeclareMathOperator{\Proj}{Proj }
\DeclareMathOperator{\Z}{\mathbb Z}
\DeclareMathOperator{\F}{\mathbb F}
\DeclareMathOperator{\A}{\mathbb A}
\DeclareMathOperator{\mO}{\mathcal O}
\DeclareMathOperator{\mP}{\mathbb P}
\DeclareMathOperator{\mfm}{\mathfrak m}
\DeclareMathOperator{\codim}{\mbox{codim}}
\newtheorem{thm}{Theorem}[section]
\newaliascnt{defin}{thm}
\newaliascnt{lemma}{thm}
\newtheorem{lemma}[lemma]{Lemma}
\newaliascnt{cor}{thm}
\newtheorem{cor}[cor]{Corollary}
\newaliascnt{nota}{thm}
\newaliascnt{conject}{thm}
\newaliascnt{prop}{thm}
\newaliascnt{lemmadefin}{thm}
\theoremstyle{remark}
\newaliascnt{example}{thm}
\theoremstyle{remark}
\newaliascnt{rem}{thm}
\newtheorem{rem}[rem]{Remark}
\crefname{equation}{Equation}{Equations}
\title{Bertini theorems for smooth hypersurface sections containing a subscheme over finite fields}
\author{Franziska Wutz}
\begin{document}
\pagestyle{plain}

\begin{abstract}
We show the existence of a hypersurface that contains a given closed subscheme of a projective space over a finite field and intersects a smooth quasi-projective scheme smoothly, under some condition on the dimension. This generalizes a Bertini theorem by Poonen and is the finite field analogue of a Bertini theorem by Altman and Kleiman. Furthermore, we add the possibility of modifying finitely many local conditions of the hypersurface. We show that the condition on the dimension is fulfilled for schemes with simple normal crossings and give an application to embeddings into smooth schemes.
\end{abstract}
\maketitle
\section{Introduction}

For a smooth subscheme $X$ of the projective space $\mP^n_k$ over a finite field $k$, Poonen proved the existence of smooth hypersurface sections using a geometric closed point sieve (\cite{poonen}). With this sieve method, he also proved in \cite{poonensubscheme} that the hypersurface can be assumed to contain a given closed subscheme $Z\subseteq \mP_k^n$, provided that $Z\cap X$ is smooth and $\dim X>2 \dim Z\cap X$. This was already known for infinite fields (\cite{bloch}); in this case, \cite{altmanbertini} also showed an analogue where the intersection $Z\cap X$ is not smooth, assuming $Z\subseteq X$ and another condition on the dimension. In this paper, we prove this analogue over finite fields as a special case of a result where we add the possibility to prescribe finitely many local conditions on the hypersurface. We show that the condition on the dimension is fulfilled when $Z\cap X$ is an equidimensional scheme with simple normal crossings and prove embedding results for schemes over finite fields.

Let $\F_q$ be a finite field of $q=p^a$ elements. Let $S=\mathbb F_q\left[x_0,\ldots,x_n\right]$ be the homogeneous coordinate ring of the projective space $\mP^n$ over $\F_q$ and let $S_d\subseteq S$ the $\mathbb F_q$-subspace of homogeneous polynomials of degree $d$. Let $S_{\homog}=\bigcup_{d\geq 0}S_d$ and let $S_d'$ be the set of all polynomials in $\F_q[x_0, \ldots x_n]$ of degree $\leq d$.

For a scheme $X$ of finite type over $\F_q$, we define the zeta function as
$$\zeta_X(s):=\prod\limits_{P\in X \text{ closed}} (1-q^{-s\deg P})^{-1}.$$
This product converges for $\mbox{Re}(s)>\dim X$. 

Let $Z$ be a fixed closed subscheme of $\mP^n$. For $d\in \Z_{\geq 0}$ let $I_d$ be the $\F_q$-subspace of polynomials $f\in S_d$ vanishing on $Z$, and $I_{\homog}=\bigcup_{d\geq 0}I_d$. For a polynomial $f\in I_d$ let $H_f=\Proj(S/(f))$ be the hypersurface defined by $f$.  
As in Poonen's paper (\cite{poonensubscheme}), we define the density of a subset $\mathcal P\subseteq I_{\homog}$ by
$$\mu_Z(\mathcal P):=\lim\limits_{d\rightarrow \infty}\frac{\#(\mathcal P\cap I_d)}{\#I_d},$$
if the limit exists. 
We have to use this density relative to $I_{\homog}$ and cannot measure the density using the definition of \cite{poonen}, since if the dimension of $Z$ is positive, the density of $I_{\homog}$ would always be zero (cf. Lemma 3.1 \cite{poonenirreducible}). We further define the upper and lower density $\overline {\mu}_Z(\mathcal P)$ and $\underline{\mu}_Z(\mathcal P)$ of a subset $\mathcal P\subseteq I_{\homog}$ by 
$$\overline{\mu}_Z(\mathcal P):=\limsup\limits_{d\rightarrow \infty}\frac{\#(\mathcal P\cap I_d)}{\#I_d}, \; \mbox{ and }\; \underline{\mu}_Z(\mathcal P) = \liminf\limits_{d\rightarrow \infty}\frac{\#(\mathcal P\cap I_d)}{\#I_d}.$$ 

We define the embedding dimension of $X$ at $x$ to be
$e(x)=\dim_{\kappa(x)}(\Omega^1_{X|\F_q}(x))$. Let
$$X_e=X(\Omega^1_{X|\F_q}, e)$$
be the subscheme such that a scheme morphism $f:T\rightarrow X$ factors through $X_e$ if and only if $f^*\Omega^1_{X|\F_q}$ is locally free of rank $e$. Then $X_e$ is the locally closed subscheme of $X$ where the embedding dimension of $X$ is $e$. 

Following Poonen (\cite{poonen} Lemma 1.2), we may impose local conditions on the hypersurface at a finite subscheme $Y$ of $\mP^n$. For a polynomial $f\in I_d$ we define $f\big|_Y\in H^0(Y,\mathcal I_Z \cdot \mO_Y)$ as follows: on each connected component $Y_i$ of $Y$ let $f\big|_Y$ be equal to the restriction of $x_j^{-d}f$ to $Y_i$, where $j=j(i)$ is the smallest $j\in \left\{0,1,\ldots,n\right\}$ such that the coordinate $x_j$ is invertible on $Y_i$.

\begin{thm}
\label{bertinifqtaylor}
Let $X$ be a quasi-projective subscheme of $\mP^n$ over $\F_q$ and let $Z$ be a closed subscheme of $\mP^n$. Let $Y$ be a finite subscheme of $\mP^n$, such that $U:=X-(X\cap Y)$ is smooth of dimension $m\geq 0$ and $Y\cap Z=\emptyset$. Let $V=Z\cap U$ be the intersection and let $T$ be a subset of $H^0(Y,\mathcal I_Z \cdot \mO_Y)$. If
$$\mathcal P=\left\{f\in I_{\homog}:\; H_f\cap U \mbox{ is smooth of dimension } m-1\mbox{ and } f\big|_Y\in T\right\},$$
then the following holds: 
\begin{enumerate}
\item 
If $\max\limits_{0\leq e\leq m-1} \left\{e+\dim V_e\right\}< m$ and $V_m=\emptyset$, then 
$$\mu_Z(\mathcal P)=\frac{\#T}{\#H^0(Y,\mathcal I_Z\cdot \mO_Y)}\frac{\zeta_V(m+1)}{\zeta_U(m+1)\prod\limits_{e=0}^{m-1} \zeta_{V_e}(m-e)}\neq 0.$$
\item
If $\max\limits_{0\leq e\leq m-1} \left\{e+\dim V_e\right\}\geq m$ or $V_m=\emptyset$, then $\mu_Z(\mathcal P)=0$.
\end{enumerate}
\end{thm}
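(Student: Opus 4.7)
The plan is to prove both parts simultaneously by Poonen's closed point sieve, adapted from $S_d$ to the subspace $I_d$ and refined along the stratification $V = \bigsqcup_e V_e$. First I would handle the local conditions at $Y$: since $Y \cap Z = \emptyset$, for $d$ sufficiently large the evaluation map $I_d \to H^0(Y, \mathcal I_Z \cdot \mO_Y)$ given by $f \mapsto f\big|_Y$ is surjective with kernel of fixed codimension, so the condition $f\big|_Y \in T$ is independent of the smoothness conditions in the limit and contributes the factor $\#T / \#H^0(Y, \mathcal I_Z \cdot \mO_Y)$. After this reduction I may assume $T$ is the whole target and focus on $\mu_Z(\{f : H_f \cap U \text{ smooth of dim } m-1\})$.

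Next I would decompose the event "$H_f \cap U$ is smooth of dimension $m-1$" as "$H_f \cap U$ is smooth at every closed point of $U$" and further split those points as lying in $U \setminus V$ or in $V = \bigsqcup_{e=0}^{m} V_e$; the hypothesis $V_m = \emptyset$ is essential, since at a point where $V$ locally fills out $U$ one would force $H_f \supseteq U$ and lose the dimension drop. Following Poonen's three-range scheme, I would bound for each locus the density of polynomials failing smoothness at points of low, medium, and high degree respectively. The low-degree analysis is an exact local computation: at a closed point $P$ of $U$, I express the singular locus condition in terms of the jet $f \bmod \mathfrak m_P^2$, count preimages in $I_d$ for $d$ large (the map $I_d \to \mO_{\mP^n,P}/\mathfrak m_P^2$ is surjective because $P \notin Z$ on $U \setminus V$, and has controlled image when $P \in V_e$ because vanishing on $Z$ already forces part of the jet), and take a product over closed points, producing precisely the zeta factors $\zeta_U(m+1)^{-1}$, $\zeta_V(m+1)$, and $\zeta_{V_e}(m-e)^{-1}$ displayed in the formula.

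The heart of the proof is the medium- and high-degree estimate. On $U \setminus V$ this is a transcription of Poonen's argument for hypersurfaces containing $Z$ (as in \cite{poonensubscheme}), using products of $1+yg$ type perturbations with $y$ a section vanishing on $Z$; the bound $\dim U = m$ yields negligibility past the crossover degree. On $V$ I stratify by $e$ and work locally: at a point $v \in V_e$ of degree $\gg 1$, smoothness of $H_f \cap U$ at $v$ is determined by the image of $df$ in a quotient of rank $m-e$ of $\Omega^1_{U,v}$ modulo the relations coming from $V$, so failure has local "codimension" $m-e$ in $I_d$. A uniform counting estimate then shows that the density of $f \in I_d$ singular along some high-degree point of $V_e$ is bounded by $C \cdot q^{-(m-e)r + \dim V_e \cdot r}$ summed over residue degrees $r$, which tends to $0$ precisely when $e + \dim V_e < m$. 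Combining the two ranges and passing to the limit using the convergence of $\zeta_V$, $\zeta_U$, $\zeta_{V_e}$ at the relevant arguments (which requires $\dim V < m+1$ and $\dim V_e < m-e$) yields case (1); the opposite inequality or $V_m \neq \emptyset$ makes the relevant local codimension non-positive or forces $H_f \supseteq U$ locally, giving $\mu_Z(\mathcal P) = 0$ in case (2).

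The main obstacle I expect is the high-degree estimate on the strata $V_e$, specifically producing enough perturbations \emph{inside $I_d$} that act transitively on the rank-$(m-e)$ quotient of $\Omega^1_{U,v}$ while preserving vanishing on $Z$; this requires constructing, for points $v$ of sufficiently high degree, sections of $\mathcal I_Z(d)$ whose restrictions to the first-order neighbourhood of $v$ in $U$ surject onto the appropriate quotient, uniformly in the degree of $v$. Once this surjectivity is established (using that $e$ is the embedding dimension of $V$ at $v$, so an $e$-dimensional piece of the cotangent direction is already absorbed by vanishing on $Z$), the rest of the argument is a careful bookkeeping of Poonen's sieve.
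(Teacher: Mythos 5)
Your proposal follows essentially the same sieve architecture as the paper: Poonen's low/medium/high-degree decomposition restricted to $I_d$ rather than $S_d$, the factorization of $V$ along the flattening strata $V_e$, the local jet count giving $q^{-(m-e)\deg P}$ at a point of $V_e$ and $q^{-(m+1)\deg P}$ off $V$, and the separate treatment of $Y$ via surjectivity of $I_d\to H^0(Y,\mathcal I_Z\cdot\mO_Y)$. You also correctly single out the high-degree estimate on $V_e$ as the crux, and correctly locate the needed ingredient (perturbations living inside $\mathcal I_Z$ because the conormal of $V$ in $U$ is generated by such sections). So in outline this matches \cref{smalldegreetaylor}, \cref{mediumdegree}, \cref{highdegreeoffv} and \cref{highdegreeonv}.

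Two points where the writeup goes astray, both in the $V_e$ high-degree step. First, the bound you quote for high-degree points --- density $\lesssim q^{(\dim V_e-(m-e))r}$ summed over $r$ --- is the \emph{medium}-degree estimate (\cref{mediumdegree}): it relies on surjectivity of $I_d\to H^0(C,\mathcal I_Z\cdot\mO_C)$, which fails once $\deg P>\frac{d-c}{m+1}$. Past that crossover there are far more points than jets, and the naive probability-times-point-count estimate no longer closes. Second, the remedy is not to find perturbations "acting transitively on the rank $m-e$ quotient"; that would be too much to ask (and unnecessary). What the paper does in \cref{highdegreeonv} is Poonen's iterative dimension-reduction: pick local parameters $t_1,\dots,t_{m-e}$ spanning the conormal of $V$ in $U$ at $P$, arrange that they lift to $\mathcal I_Z$, and write $f=f_0+g_1^pt_1+\cdots+g_{l_e+1}^pt_{l_e+1}$ with $l_e=\dim V_e$. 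Since $e+l_e<m$ forces $l_e+1\le m-e$, one indeed has enough $t_i$'s available in $\mathcal I_Z$; the $p$-th-power trick makes $D_if=D_if_0+g_i^ps$ independent, and one drops $\dim W_i=\dim(V_e\cap\{D_1f=\cdots=D_if=0\})$ by one at each step, needing only $l_e+1$ perturbations (not $m-e$) to reach a finite and then empty locus. Your final paragraph gestures at this but mislocates the count; the dimension being killed is $\dim V_e$, not $m-e$, and the crucial numerical coincidence is precisely $\dim V_e+1\le m-e$ supplied by the hypothesis $e+\dim V_e<m$.
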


The special case for $Y=\emptyset$ and $T=\{0\}$ yields the following result of \cite{wutz} (Theorem 2.1), which has meanwhile also been proved independently by Gunther (\cite{gunther} Theorem 1.1): 

\begin{thm}
\label{bertinifq}
Let $X$ be a quasi-projective subscheme of $\mP^n$ which is smooth of dimension $m\geq 0$ over $\F_q$. Let $Z$ be a closed subscheme of $\mP^n$ and let $V:=Z\cap X$ be the intersection. If
$$ \mathcal P=\left\{f\in I_{\homog}:\; H_f\cap X \mbox{ is smooth of dimension } m-1\right\},$$
then the following holds:
\begin{enumerate}
\item If $\max\limits_{0\leq e\leq m-1} \left\{e+\dim V_e\right\}< m$ and $V_m=\emptyset$, then 
$$\mu_Z(\mathcal P)=\frac{\zeta_V(m+1)}{\zeta_X(m+1)\prod\limits_{e=0}^{m-1} \zeta_{V_e}(m-e)} =\frac{1}{\zeta_{X- V}(m+1)\prod\limits_{e=0}^{m-1} \zeta_{V_e}(m-e)} \neq 0.$$
In particular, there exists a hypersurface $H$ of degree $d \gg 1$ containing $Z$ such that $H\cap X$ is smooth of dimension $m-1$.
\item
If $\max\limits_{0\leq e\leq m-1} \left\{e+\dim V_e\right\}\geq m$ or $V_m\neq\emptyset$, then $\mu_Z(\mathcal P)=0$.
\end{enumerate} 
\end{thm}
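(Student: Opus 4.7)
The plan is to obtain Theorem~\ref{bertinifq} as the special case of Theorem~\ref{bertinifqtaylor} with $Y = \emptyset$ and $T = \{0\}$, exactly as flagged by the sentence preceding the statement. All substantive content sits in Theorem~\ref{bertinifqtaylor}; the task here is to check that the data match up and that the two ways of writing the density in part (1) agree.

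First I would match the data. With $Y = \emptyset$, the condition $Y \cap Z = \emptyset$ is vacuous, the open subscheme $U = X - (X \cap Y)$ equals $X$ (smooth of dimension $m$ by hypothesis), and the intersection $V = Z \cap U$ is $Z \cap X$, which is the $V$ appearing in Theorem~\ref{bertinifq}. Moreover $H^0(Y, \mathcal{I}_Z \cdot \mO_Y) = \{0\}$, so the choice $T = \{0\}$ gives $\#T / \#H^0(Y, \mathcal{I}_Z \cdot \mO_Y) = 1$, and the condition ``$f\big|_Y \in T$'' is automatic. Hence the set $\mathcal P$ of Theorem~\ref{bertinifq} coincides with the $\mathcal P$ of Theorem~\ref{bertinifqtaylor} under this specialization.

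Under the dimension hypothesis of part~(1), part~(1) of Theorem~\ref{bertinifqtaylor} then delivers the first equality
$$\mu_Z(\mathcal P) = \frac{\zeta_V(m+1)}{\zeta_X(m+1) \prod_{e=0}^{m-1} \zeta_{V_e}(m-e)} \neq 0.$$
For the second equality, I would use that $V$ is closed in $X$, so the set of closed points of $X$ is the disjoint union of those of $V$ and those of $X - V$; the Euler product defining the zeta function then factors as $\zeta_X(s) = \zeta_V(s)\,\zeta_{X-V}(s)$, which gives $\zeta_V(m+1)/\zeta_X(m+1) = 1/\zeta_{X-V}(m+1)$. The final sentence of part~(1) is automatic: $\mu_Z(\mathcal P) \neq 0$ forces $\mathcal P \cap I_d$ to be non-empty for all sufficiently large $d$, so for such $d$ any $f \in \mathcal P \cap I_d$ defines a hypersurface $H = H_f$ of degree $d$ containing $Z$ with $H \cap X$ smooth of dimension $m-1$.

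Part~(2) is then just part~(2) of Theorem~\ref{bertinifqtaylor} for the same specialization, under the matching dimension hypothesis. There is essentially no obstacle in this deduction: the only non-trivial sanity check is the zeta-function manipulation above, and the entire technical content is packaged inside Theorem~\ref{bertinifqtaylor}, whose proof presumably occupies the bulk of the paper via a closed point sieve in the spirit of Poonen.
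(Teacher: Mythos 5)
Your proposal is correct and is precisely the paper's own derivation: the text immediately before Theorem~\ref{bertinifq} states that it is the special case $Y=\emptyset$, $T=\{0\}$ of Theorem~\ref{bertinifqtaylor}, and your verification of the specialization and of the zeta-function factorization $\zeta_X = \zeta_V\,\zeta_{X-V}$ fills in exactly the routine details the paper leaves implicit.
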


\begin{rem}
\begin{itemize}
\item [(i)]
If we choose $Z$ to be empty, then the conditions of \cref{bertinifq}(i) are fulfilled and \cref{bertinifq} gives Theorem 1.1 of \cite{poonen}.
\item [(ii)]
If the intersection $V=Z\cap X$ is smooth of dimension $l\geq 0$ as required in \cite{poonensubscheme}, then the condition on the dimension in \cref{bertinifq} implies $l+\dim V = 2 l < m$ and therefore \cref{bertinifq} (i) also yields the statement of Theorem 1.1 of \cite{poonensubscheme}.
\item [(iii)]
The density in \cref{bertinifq} is independent of the embedding $X\hookrightarrow \mP^n$.
\item[(iv)]
Note that the density is relative to $I_{\homog}$ and does not depend on points outside of $Z$; therefore we must fix $Z$ at the beginning and cannot, in general, compare two densities obtained for different closed schemes.
\end{itemize}
\end{rem}

\begin{cor}
Let $X$ be a quasi-projective subscheme of $\mP^n$ that is smooth of dimension $m\geq 0$ over $\F_q$ at all but finitely many closed points $P_1,\ldots, P_r$. Let $Z$ be a closed subscheme of $\mP^n$ that does not contain any of those points and let $V=Z\cap X$ be the intersection. Suppose $\max\limits_{0\leq e\leq m-1} \left\{e+\dim V_e\right\}< m$ and $V_m=\emptyset$. Then for $d\gg 1$, there exists a hypersurface $H$ of degree $d$ that contains $Z$ but none of the points $P_1,\ldots,P_r$, such that $H\cap X$ is smooth of dimension $m-1$.
\end{cor}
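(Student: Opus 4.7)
The plan is to apply \cref{bertinifqtaylor} with a carefully chosen finite subscheme $Y$ and set of local conditions $T$. First, I would take $Y = \{P_1,\ldots,P_r\}$ with its reduced subscheme structure. Since $Z$ contains none of the $P_i$, we have $Y\cap Z = \emptyset$, and $U := X - (X\cap Y) = X\setminus\{P_1,\ldots,P_r\}$ is smooth of dimension $m$ by hypothesis. Moreover $V = Z\cap U = Z\cap X$ (the removed points are disjoint from $Z$), so the dimension condition on the $V_e$ in the corollary is exactly the one in \cref{bertinifqtaylor}(i).

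Next, I would choose $T$ so that any $f$ with $f|_Y \in T$ automatically forces $H_f$ to avoid every $P_i$. Because $Y\cap Z = \emptyset$, the sheaf $\mathcal{I}_Z\cdot\mO_Y$ equals $\mO_Y$, and since $Y$ is reduced with connected components $\{P_i\} = \Spec\kappa(P_i)$, we have $H^0(Y,\mathcal{I}_Z\cdot\mO_Y) = \prod_{i=1}^{r}\kappa(P_i)$. The condition $P_i \notin H_f$ then translates to the $i$-th component of $f|_Y$ being nonzero, so I would set
\[
T = \Bigl\{(t_1,\ldots,t_r)\in \prod_{i=1}^{r}\kappa(P_i) : t_i\neq 0 \text{ for every } i\Bigr\}.
\]
Then $T$ is nonempty, with $\#T = \prod_{i=1}^{r}(\#\kappa(P_i) - 1) > 0$.

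With these choices, the hypotheses of \cref{bertinifqtaylor}(i) are satisfied, and the conclusion gives $\mu_Z(\mathcal P) > 0$. Hence for $d\gg 1$ there exists $f\in I_d$ with $H_f\cap U$ smooth of dimension $m-1$ and $f|_Y\in T$. The latter forces every $P_i$ to lie outside $H_f$, so $H_f\cap X = H_f\cap U$ is smooth of dimension $m-1$, while $f\in I_{\homog}$ gives $Z\subseteq H_f$. The argument is a direct specialization of \cref{bertinifqtaylor}; the only point requiring care is the identification $\mathcal{I}_Z\cdot\mO_Y = \mO_Y$ arising from $Y\cap Z=\emptyset$, which is what allows $T$ to be described explicitly. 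I do not foresee any real obstacle.
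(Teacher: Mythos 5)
Your proposal is correct and is essentially identical to the paper's own proof: same choice of $Y$ as the reduced finite scheme on the $P_i$, same set $T$ of tuples with nonzero components, and the same application of \cref{bertinifqtaylor}(i). Your observation that $\mathcal{I}_Z\cdot\mO_Y=\mO_Y$ (so $H^0(Y,\mathcal{I}_Z\cdot\mO_Y)=\prod_i\kappa(P_i)$) is a mild clarification of what the paper writes more tersely, but the argument is the same.
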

\begin{proof}
Let $Y_i=\Spec \kappa (P_i)$ and $Y=\bigcup\limits_{i=1}^r Y_i$. Then $U=X-(X\cap Y)$ is smooth of dimension $m\geq 0$, $Y\cap Z=\emptyset$ and $H^0(Y, \mathcal I_Z\cdot \mO_Y)=\prod\limits_{i=1}^r \mathcal I_{Z,P_i}\cdot\kappa(P_i)$. We define $T\subseteq H^0(Y,\mathcal I_Z\cdot \mO_Y)$ to be the nonempty set of elements that are nonzero in every component of the above product. For $f\big|_Y\in T$ this implies $P_i\notin H_f$ for all $1\leq i\leq r$, and thus $H_f\cap Y=\emptyset$. 

Applying \cref{bertinifqtaylor}, we get the existence of a hypersurface $H$ of degree $d\gg 1$ that does not intersect $Y$ and therefore contains none of the points $P_1,\ldots,P_r$; further it intersects $U$ and thus also $X$ smoothly.
\end{proof}

Let $V$ be a subscheme of $\mP^n$ and let $W_1,\ldots, W_s$ be the irreducible components of $V$. We say that $V$ has simple normal crossings if $W_i$ is smooth for any $i$, $\bigcap\limits_{i \in I}W_i$ is smooth and $\codim_V \bigcap\limits_{i\in I} W_i=\#I-1$ for any subset $I\subseteq\left\{1,\ldots, s\right\}$. 

\begin{cor}
\label{bertinisnc}
Let the notations be as in \cref{bertinifq}. Suppose $V$ is equidimensional of dimension $l$ and has simple normal crossings. If furthermore $2l <m$ holds, then there exists a hypersurface $H$ containing $Z$ such that $H\cap X$ is smooth of dimension $m-1$.
\end{cor}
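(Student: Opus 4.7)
The plan is to apply \cref{bertinifq} directly to the given $X$ and $Z$: once we verify the two hypotheses $V_m=\emptyset$ and $\max_{0\le e\le m-1}\{e+\dim V_e\}<m$, part (i) of that theorem yields $\mu_Z(\mathcal P)\neq 0$, so $\mathcal P$ is non-empty and contains a hypersurface of the required kind.

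The heart of the argument is to compute the embedding-dimension stratification $\{V_e\}$ of $V$ under the SNC assumption. I claim $V_l$ is the smooth locus of $V$ (of dimension $l$), $V_{l+1}$ is its singular locus $\bigcup_{i<j}(W_i\cap W_j)$ (of dimension $l-1$, or empty if $V$ is irreducible), and $V_e=\emptyset$ for every other $e$. The value $e=l$ on the smooth locus is immediate. At a point $p$ lying on exactly $k\ge 2$ components $W_{i_1},\dots,W_{i_k}$, I would use $T_pV=T_pW_{i_1}+\cdots+T_pW_{i_k}$ inside $T_p\mP^n$; the SNC condition gives $\dim T_p(W_{i_{j_1}}\cap\cdots\cap W_{i_{j_s}})=l-s+1$ for each $s$, and iterating $\dim(U+W)=\dim U+\dim W-\dim(U\cap W)$ forces this sum to have dimension exactly $l+1$. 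Equivalently, in an \'etale-local SNC model $V\cong\{z_1\cdots z_k=0\}\subset\mathbb A^{l+1}$, the product $z_1\cdots z_k$ lies in $\mathfrak m^2$, so $\mathfrak m/\mathfrak m^2$ has dimension $l+1$.

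With this stratification the hypothesis $2l<m$ does the rest. For $l\ge 1$ it gives $l+1\le 2l<m$, so $V_m=\emptyset$; the case $l=0$ is automatic, since SNC then forces $V$ to be a disjoint union of reduced points and only $V_0$ is non-empty. In every case,
$$\max_{0\le e\le m-1}\{e+\dim V_e\}\le \max\bigl(l+l,\,(l+1)+(l-1)\bigr)=2l<m.$$
Therefore \cref{bertinifq}(i) applies and produces the desired $H$.

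The main obstacle is the tangent-space computation at singular points of $V$: the iterative dimension argument has to use the SNC hypothesis on every multi-way intersection of components in order to pin $\dim(T_pW_{i_1}+\cdots+T_pW_{i_k})$ down to exactly $l+1$, rather than allowing it to be larger. Everything else is essentially bookkeeping against \cref{bertinifq}.
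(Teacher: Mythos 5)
Your approach is essentially the paper's: verify the two hypotheses of \cref{bertinifq}(i) by analysing the embedding-dimension stratification $\{V_e\}$ via the SNC transversality, then conclude. The bookkeeping differs slightly. The paper only proves the weaker bound ``if $P$ lies on exactly $k$ components then $e_V(P)\le l+k-1$'', so $V_{l+k}$ is contained in the union of $(k+1)$-fold intersections of components (which has dimension $l-k$ by SNC), giving $\dim V_{l+k}+(l+k)\le 2l$; $V_m=\emptyset$ is then a separate observation. You instead pin down $e_V(P)=l+1$ exactly at every singular point, so that $V_{l+1}$ is the whole singular locus (dimension $l-1$) and $V_e=\emptyset$ for $e\ge l+2$, which also yields $\le 2l$ and $V_m=\emptyset$ simultaneously. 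Your claim is sharper than what the paper states, and of course it suffices.

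One point to be careful about: the identity $T_pV=T_pW_{i_1}+\cdots+T_pW_{i_k}$ is not automatic. Since $I_V=\bigcap_j I_{W_{i_j}}$, one always has $T_pV\supseteq\sum_j T_pW_{i_j}$, but the reverse inclusion amounts to the nontrivial statement that the image of $\bigcap_j I_{W_{i_j}}$ in $\mathfrak m_p/\mathfrak m_p^2$ equals $\bigcap_j$ of the images, and this fails without the SNC transversality (e.g. two smooth curves meeting to second order). Your iterated $\dim(U+W)$ computation only controls $\dim\bigl(\sum_j T_pW_{i_j}\bigr)$, not $\dim T_pV$ itself, so the upper bound $e_V(P)\le l+1$ still needs an argument — either via the \'etale-local normal-crossings model (which itself has to be deduced from the paper's definition of SNC) or by a direct computation of $\overline{I_V}\subseteq\mathfrak m_p/\mathfrak m_p^2$. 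To be fair, the paper's proof is equally terse here — it asserts $e_V(P)\le l+k-1$ with only a ``one can prove by induction'' — so this is a gap in both write-ups rather than a defect specific to yours; still, it is the step you should flesh out if you want a complete proof.
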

\begin{proof}
We show that the conditions of \cref{bertinifq} (i) are fulfilled for the schemes $V_e=V_{l+k}$ of the flattening stratification of $V$ for $0\leq k \leq m-l$. One can prove by induction that $V_{l+k}$ is contained in the union of all intersections of $k+1$ irreducible components of $V$; more precisely, if a point $P$ is in the intersection of exactly $k$ irreducible components of $V$, then $e_V(P)\leq l+k-1$. This yields $\dim V_{l+k}+l+k\leq 2l$ for $0\leq l\leq m$. Hence, if $2l<m$ holds, then the conditions of \cref{bertinifq} are satisfied and the corollary follows. 

Note that $V_m$ is empty, since $V_{m-1}$ is contained in the union of the intersections of $m-l$ irreducible components, and this union is already of dimension zero. As $V$ has simple normal crossings, the intersection of $m-l+1$ components, which contains $V_m$, must be empty.
\end{proof}

\begin{cor}
Let $Z$ be a quasi-projective scheme over $\F_q$ satisfying
$$\max\{e+\dim Z_e\}\leq r.$$
Then $Z$ can be embedded in a smooth scheme $Y$ over $\F_q$ of dimension $r$. If $Z$ is projective, we can choose $Y$ to be projective.

In particular, if $Z$ is of dimension $l$ with simple normal crossings, then there exists a smooth $2l$-dimensional scheme $Y$ over $\F_q$ in which $Z$ can be embedded. $Y$ can be chosen projective if $Z$ is projective.
\end{cor}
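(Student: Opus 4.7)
The plan is to embed $Z$ in a projective space and repeatedly cut by hypersurfaces using \cref{bertinifq}, reducing the dimension of the ambient smooth scheme by one at each step until it equals $r$. The key point is to arrange the ambient so that the intersection with the chosen closure of $Z$ is literally $Z$, which forces the relevant embedding-dimension strata to be the strata $Z_e$ at every stage. Concretely, I would fix a locally closed immersion $Z\hookrightarrow\mP^n$ with $n>r$, let $\bar Z\subseteq \mP^n$ be the scheme-theoretic closure of $Z$, and set
$$X_0:=\mP^n\setminus(\bar Z\setminus Z).$$
Since $\bar Z\setminus Z$ is closed in $\bar Z$ and $\bar Z$ is closed in $\mP^n$, the subset $X_0$ is open in $\mP^n$, hence smooth quasi-projective of dimension $n$, and by construction $\bar Z\cap X_0=Z$ as a closed subscheme of $X_0$. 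In particular the embedding-dimension strata $V_e$ of $V:=\bar Z\cap X_0$ coincide with the strata $Z_e$.

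I would then iterate \cref{bertinifq} with smooth ambient $X_i$ and closed subscheme $\bar Z\subseteq\mP^n$, obtaining at step $i$ a hypersurface $H_{i+1}\supseteq\bar Z$ in $\mP^n$ such that $X_{i+1}:=H_{i+1}\cap X_i$ is smooth of dimension $n-i-1$. Because every $H_j$ contains $\bar Z$, one has $\bar Z\cap X_i=\bar Z\cap X_0=Z$ throughout, so $V_e=Z_e$ at every stage. The hypothesis $\max\{e+\dim Z_e\}\leq r$ forces $Z_e=\emptyset$ for $e>r$ and gives $\max\{e+\dim V_e\}\leq r<n-i$ whenever $i\leq n-r-1$; hence \cref{bertinifq}(i) applies at each step, and after $n-r$ iterations $Y:=X_{n-r}$ is a smooth quasi-projective subscheme of $\mP^n$ of dimension $r$ containing $Z$ as a closed subscheme. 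When $Z$ is projective, $\bar Z=Z$ so $X_0=\mP^n$, each $X_i$ is closed in $\mP^n$, and $Y$ is projective.

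For the second assertion I would verify $\max\{e+\dim Z_e\}\leq 2l$ when $Z$ is equidimensional of dimension $l$ with simple normal crossings, using the argument of \cref{bertinisnc}: a point lying on exactly $k$ irreducible components of $Z$ has embedding dimension at most $l+k-1$, the SNC codimension condition yields $\dim Z_{l+k}\leq l-k$ for $k\geq 0$, and $Z_e=\emptyset$ for $e<l$. Applying the first part with $r=2l$ then produces the desired $Y$. The main subtlety in the whole argument is the choice of $X_0$: taking $X_0=\mP^n$ and iterating with $\bar Z$ in place of $V$ would require a bound on $\max\{e+\dim\bar Z_e\}$, which the hypothesis does not supply; removing $\bar Z\setminus Z$ from the ambient at the outset is exactly what makes the quasi-projective version go through, at the necessary cost of projectivity when $Z$ is not projective.
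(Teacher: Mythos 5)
Your proposal is correct and follows essentially the same approach as the paper: embed $Z$ as a closed subscheme of a smooth open $X_0\subseteq\mP^n$ (so that the closure $\bar Z$ in $\mP^n$ meets $X_0$ exactly in $Z$), then repeatedly apply \cref{bertinifq} to cut down the ambient dimension by one while preserving $Z$ and the strata $Z_e$, stopping at dimension $r$. You spell out the bookkeeping (the role of $\bar Z$, the invariance of $\bar Z\cap X_i=Z$, the iteration count, and the projectivity remark) that the paper's terse proof leaves implicit, but the method is identical.
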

\begin{proof}(cf \cite{altmanbertini} Theorem 8)
If $Z$ is projective, let $Z$ be closed in $X=\mP^n$. (For the quasi-projective case, embed $Z$ is closed in some open smooth subscheme $X\subseteq \mP^N$ of dimension $n$.) By assumption, we have $\max\{e+\dim Z_e\}\leq r$ and \cref{bertinifq} gives a hypersurface $H$ containing $Z$ which is smooth of dimension $n-1$. Inductively, we get a smooth scheme $Y$ of dimension $r$ containing $Z$. The second part follows, since $\max\{e+\dim Z_e\}\leq 2l$, as seen in the proof of \cref{bertinisnc}.
\end{proof}

The proof of \cref{bertinifqtaylor} uses the closed point sieve introduced in \cite{poonen} and is parallel to the one in \cite{poonensubscheme}. Gunther also used the sieve proof to show \cref{bertinifq}.\vspace{\baselineskip}

{\bf Acknowledgments.} The author would like to thank Uwe Jannsen and Patrick Forré for helpful discussions. Furthermore, the author gratefully acknowledges support from the Deutsche Forschungsgemeinschaft through the SFB 1085 \textit{Higher Invariants}.

\section{Singular points of low degree}

Let $\mathcal I_Z\subseteq \mO_{\mP^n}$ be the ideal sheaf of $Z$; then $I_d=H^0(\mP^n,\mathcal I_Z(d))$. As in \cite{poonensubscheme}, we fix an integer $c$ such that $S_1 I_d=I_{d+1}$ for all $d\geq c$.

\begin{lemma}
\label{surjective}
(\cite{poonensubscheme}, Lemma 2.1.)
Let $Y$ be a finite subscheme of $\mP^n$ over $\F_q$. Let
$$\phi_d:I_d=H^0(\mP^n, \mathcal I_Z(d))\rightarrow H^0(Y,\mathcal  I_Z\cdot \mO_Y(d))$$
be the map induced by the map of sheaves $\mathcal I_Z\rightarrow \mathcal I_Z\cdot \mO_Y$ on $\mP^n$. Then $\phi_d$ is surjective for $d\geq c+\dim H^0(Y,\mathcal I_Z \cdot \mathcal O_Y)$.
\end{lemma}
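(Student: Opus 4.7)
Since $Y$ is zero-dimensional, the line bundle $\mO(d)$ is trivializable on $Y$ via $x_{j(i)}^d$ on each connected component $Y_i$, and these trivializations identify the target $H^0(Y,\mathcal{I}_Z\cdot\mO_Y(d))$ with a fixed $\F_q$-vector space $J$ of dimension $N:=\dim_{\F_q}H^0(Y,\mathcal{I}_Z\cdot\mO_Y)$. Write $M_d\subseteq J$ for the image of $\phi_d$ under these identifications; the lemma becomes the assertion that $M_d=J$ for every $d\geq c+N$. The plan is to show that the $M_d$ form an ascending chain in $J$ that strictly grows in dimension until it fills $J$, and hence must reach $J$ after at most $N$ steps beyond degree $c$.

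The hypothesis $S_1\cdot I_d=I_{d+1}$ for $d\geq c$, together with the compatibility of $\phi$ with multiplication by linear forms, gives $M_{d+1}=S_1\cdot M_d$ for $d\geq c$, where $S_1$ acts on $J$ componentwise by multiplication by the tuples $u_k:=((x_k/x_{j(i)})|_{Y_i})_i\in\mathcal{O}_Y=\prod_i\mathcal{O}_{Y_i}$. In a single-chart situation this action contains the identity (via $u_{j(i)}=1$), so the $M_d$ form an honest ascending chain $M_c\subseteq M_{c+1}\subseteq\cdots$ in $J$; in the general multi-component case one works componentwise and uses that the $\F_q$-subalgebra of $\mathcal{O}_Y$ generated by the $u_k$'s is all of $\mathcal{O}_Y$, which follows from the fact that distinct closed points of $Y$ are separated by homogeneous coordinates in $\mP^n$ and each residue field $\kappa(Y_i)$ is generated over $\F_q$ by the values of the affine coordinates on $Y_i$.

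Now suppose $M_{d_0}=M_{d_0+1}$ for some $d_0\geq c$. Then $M_{d_0}$ is $S_1$-stable, hence $\mathcal{O}_Y$-stable by the previous point, i.e.\ an $\mathcal{O}_Y$-submodule of $J$. The assumption $S_1 I_d=I_{d+1}$ for $d\geq c$ is equivalent to $\mathcal{I}_Z$ being generated as an $S$-module by elements of $\bigcup_{e\leq c}I_e$, so $J=\mathcal{I}_Z\cdot\mO_Y$ is generated as an $\mathcal{O}_Y$-module by the $\phi_e$-images of those generators, each of which can be pushed into $M_{d_0}$ using the iteration above; therefore $M_{d_0}=J$. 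Combining with the previous paragraph, the chain $(M_d)_{d\geq c}$ in the $N$-dimensional space $J$ must strictly gain a dimension at each step until it equals $J$, forcing $M_d=J$ for all $d\geq c+N$, which is the claim. The main technical obstacle in this plan is the multi-component bookkeeping: the identification $N_d\cong J$ does not turn the $S_1$-action into multiplication by a constant when $Y$ is not contained in a single affine chart, so verifying that the tuples $u_k$ generate all of $\mathcal{O}_Y=\prod_i\mathcal{O}_{Y_i}$ (and hence that $S_1$-stability implies $\mathcal{O}_Y$-stability) needs a careful Chinese-remainder-style argument using the embedding $Y\hookrightarrow\mP^n$.
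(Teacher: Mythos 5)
The paper does not reprove this lemma: it is quoted verbatim from \cite{poonensubscheme}, Lemma~2.1, so there is no proof in the present paper against which to compare. On its own terms your sketch has the right high-level strategy (a stable chain of untwisted images $M_d\subseteq J$ which, once stationary, is an $\mathcal O_Y$-submodule and therefore all of $J$), but the obstacle you flag in the last paragraph is a genuine gap and is not resolved by the observation that the $u_k$'s generate $\mathcal O_Y$ as an $\F_q$-algebra.

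Concretely: the conclusion ``$M_d=J$ for $d\geq c+N$'' is extracted from the assertion that $(M_d)_{d\geq c}$ is ascending and strictly gains dimension until it fills $J$. But $M_d\subseteq M_{d+1}$ is exactly the statement that for every $f\in I_d$ there is an $f'\in I_{d+1}$ with $x_{j(i)}^{-(d+1)}f'|_{Y_i}=x_{j(i)}^{-d}f|_{Y_i}$ for all $i$; taking $f'=\ell f$ with $\ell\in S_1$ this requires $\ell\equiv x_{j(i)}\pmod{\mathcal I(Y_i)}$ simultaneously for all components, a linear system that can be unsolvable over a small $\F_q$ when $Y$ meets several coordinate charts. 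The algebra-generation fact about the tuples $u_k$ only helps \emph{after} stabilization (to upgrade $S_1$-stability to $\mathcal O_Y$-stability); it does not establish the chain. One can of course pass to the saturated chain $B_d'=\sum_{e\leq d}M_e$, which is automatically ascending and satisfies $B_{d+1}'=B_d'+\sum_ku_kB_d'$ for $d\geq c$, and then the argument does show $B_d'=J$ for $d\geq c+N$; but the lemma asserts $M_d=J$, not $B_d'=J$, and the step from $B_d'=J$ back to $M_d=J$ is missing. To make this route rigorous you either need to produce, for each $d\geq c$, a form $\ell$ of controlled degree that is a unit on all of $Y$ and use it to realize $M_d\hookrightarrow M_{d+\deg\ell}$, or argue differently (e.g.\ via $H^1(\mathcal I_{Y\cup Z}(d))$ and an explicit regularity bound); as written, the proof is incomplete.
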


\begin{rem}
\label{embeddingdimension}
In the situation of \cref{bertinifqtaylor}, for a closed point $P\in V$, we have $e_{V}(P)=\dim_{\kappa(P)}\mfm_{U,P}/(\mathcal I_{Z,P},\mfm^2_{U,P});$
in particular, $\dim U\geq e_V(P)$.
\end{rem}

\begin{lemma}
\label{countingpoints}
Let $\mathfrak m\subseteq \mathcal O_U$ be the ideal sheaf of a closed point $P\in U$. Let $C\subseteq U$ be the closed subscheme of $\mP^n$ corresponding to the ideal sheaf $\mathfrak m^2\subseteq \mathcal O_U$. Then for all $d\in \Z_{\geq 0}$,
$$ \# H^0(C, \mathcal I_Z \cdot \mathcal O_C(d))=\left\{\begin{array}{cl} q^{(m+1)\deg P}, & \mbox{ if } P\notin V, \\ q^{(m-e_V(P))\deg P}, & \mbox{ if } P\in V. \end{array} \right.$$
\end{lemma}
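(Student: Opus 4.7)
The plan is to reduce the computation to a local one at $P$. The scheme $C$ is the zero-dimensional affine scheme $\Spec(A)$ with $A = \mathcal{O}_{U,P}/\mathfrak{m}_{U,P}^2$; since every line bundle on the spectrum of a local ring is trivial, $\mathcal{O}_C(d) \cong \mathcal{O}_C$ (non-canonically), and tensoring this isomorphism with $\mathcal{I}_Z\cdot \mathcal{O}_C$ yields an isomorphism of abelian groups. Hence the cardinality of $H^0(C, \mathcal{I}_Z \cdot \mathcal{O}_C(d))$ is independent of $d$, and by affineness of $C$ it equals the cardinality of the image of $\mathcal{I}_{Z,P}$ under the projection $\mathcal{O}_{U,P}\twoheadrightarrow A$, that is, of the $\kappa(P)$-vector space $J := (\mathcal{I}_{Z,P} + \mathfrak{m}_{U,P}^2)/\mathfrak{m}_{U,P}^2$. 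Since $\#\kappa(P) = q^{\deg P}$, it suffices to determine $\dim_{\kappa(P)} J$ in each of the two cases.

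If $P \notin V$, then $P \notin Z$ (as $V = Z \cap U$ and $P \in U$), so $\mathcal{I}_{Z,P} = \mathcal{O}_{U,P}$ and $J = A$. Smoothness of $U$ at $P$ gives $\dim_{\kappa(P)} \mathfrak{m}_{U,P}/\mathfrak{m}_{U,P}^2 = m$, hence $\dim_{\kappa(P)} A = m+1$ and $\# H^0 = q^{(m+1)\deg P}$.

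If $P \in V$, then $\mathcal{I}_{Z,P} \subseteq \mathfrak{m}_{U,P}$, so I consider the short exact sequence of $\kappa(P)$-vector spaces
$$ 0 \to J \to \mathfrak{m}_{U,P}/\mathfrak{m}_{U,P}^2 \to \mathfrak{m}_{U,P}/(\mathcal{I}_{Z,P} + \mathfrak{m}_{U,P}^2) \to 0. $$
By \cref{embeddingdimension} the right-hand term has dimension $e_V(P)$, and the middle has dimension $m$ by smoothness, so $\dim_{\kappa(P)} J = m - e_V(P)$, yielding $\# H^0 = q^{(m-e_V(P))\deg P}$. The only mildly subtle point is the initial trivialization of $\mathcal{O}_C(d)$, which is where the independence from $d$ enters; the rest is standard linear algebra inside the Artinian local ring $A$.
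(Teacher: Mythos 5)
Your proof is correct and takes essentially the same route as the paper: both reduce to the Artinian local ring $A = \mathcal{O}_{U,P}/\mathfrak{m}_{U,P}^2$, note that the twist can be ignored on the finite local scheme $C$, and compute the dimension of the relevant ideal via a short exact sequence of $\kappa(P)$-vector spaces together with \cref{embeddingdimension}. The only cosmetic difference is that the paper works with $0\to \mathcal{I}_Z\cdot\mathcal{O}_C\to\mathcal{O}_C\to\mathcal{O}_{Z\cap C}\to 0$ and filters the outer terms (dimensions $m+1$ and $1+e_V(P)$), whereas you carry out the equivalent count one step down inside $\mathfrak{m}_{U,P}/\mathfrak{m}_{U,P}^2$; your explicit appeal to triviality of line bundles on a local scheme is a slightly more careful version of the paper's "we may ignore the twist."
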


\begin{proof}
Since $C$ is a finite scheme, we may ignore the twist, i.e. $H^0(C,\mathcal I_Z\cdot \mO_C(d))=H^0(C,\mathcal I_Z\cdot \mO_C)$. Taking cohomology of 
$0\rightarrow \mathcal I_Z\cdot \mO_C\rightarrow \mO_C\rightarrow \mO_{ Z\cap C} \rightarrow 0$
on the $0$-dimensional scheme $C$ and using \cite{hartshorne} Theorem III 2.7 yields an exact sequence
$$0\rightarrow H^0(C, \mathcal I_Z\cdot \mathcal O_{C})\rightarrow H^0(C,\mO_C)\rightarrow H^0(C,\mO_{Z\cap C})\rightarrow 0.$$
There is a filtration of $H^0(C,\mO_C)=\mO_{U,P}/\mfm_{U,P}^2$ whose quotients are vector spaces of dimensions $m$ and $1$ respectively over the residue field $\kappa(P)$ of $P$. Thus, $\#H^0(C,\mO_C)=\#\kappa(P)^{m+1}=q^{(m+1)\deg P}$. 
Next we determine $\#H^0(C,\mO_{Z\cap C})$. If $P\in U - V$, then $H^0(C,\mO_{Z\cap C})=0$. If $P\in V$, then $H^0(C,\mO_{Z\cap C})$ has a filtration whose quotients have dimensions 1 and $e_V(P)$ over $\kappa(P)$ by \cref{embeddingdimension}. Hence,
\begin{align*}
\#H^0(C,\mathcal I_Z\cdot \mO_C)
&=\left\{\begin{array}{cl} q^{(m+1)\deg P}, & \mbox{ if } P\notin V, \\ q^{(m+1)\deg P}/q^{(e_{V}(P)+1)\deg P}, & \mbox{ if } P\in V. \end{array} \right.
\end{align*}
\end{proof}

For a scheme $U$ of finite type over $\mathbb F_q$ we define $U_{<r}$ to be the set of closed points of $U$ of degree $<r$. Let $U_{>r}$ be defined similarly.

\begin{lemma}[Singularities of low degree]
\label{smalldegreetaylor}
Define
\begin{align*}
\mathcal P_r:=\{ f\in I_{\homog}: &\; H_f\cap U \mbox{ is smooth of dimension } m-1 \\
&\;\mbox{at all points } P\in U_{<r} \mbox{ and } f\big|_Y\in T\}.
\end{align*}
Then 
$$\mu_Z(\mathcal P_r)= \frac{\#T}{\#H^0(Y,\mathcal I_Z\cdot\mO_Y)}\prod\limits_{P\in (U-V)_{<r}}(1-q^{-(m+1)\deg P})\prod\limits_{e=0}^m \prod\limits_{P\in (V_e)_{<r}}(1-q^{-(m-e)\deg P}).$$
\end{lemma}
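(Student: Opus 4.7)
The plan is to apply Poonen's closed point sieve, combining the Taylor condition at $Y$ and the smoothness conditions at all closed points $P\in U_{<r}$ into a single finite-scheme computation. Since $Y$ is disjoint from $U$ by the definition $U=X-(X\cap Y)$, and the finitely many first-order thickenings $C_P\subseteq U$ from \cref{countingpoints} are pairwise disjoint, the union $\widetilde Y:=Y\sqcup \bigsqcup_{P\in U_{<r}} C_P$ is a finite subscheme of $\mP^n$. Applying \cref{surjective} to $\widetilde Y$, for $d$ sufficiently large the map
$$\phi_d:\; I_d \longrightarrow H^0(Y,\mathcal I_Z\cdot\mO_Y(d))\times \prod_{P\in U_{<r}} H^0(C_P,\mathcal I_Z\cdot\mO_{C_P}(d))$$
is surjective, so $\#(\mathcal P_r\cap I_d)/\#I_d$ equals the ratio of the number of good tuples on the right to the total cardinality of the right-hand side.

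The second step is to describe the smoothness condition at each $P$ as the nonvanishing of the image of $f$ in $H^0(C_P,\mathcal I_Z\cdot\mO_{C_P}(d))$. If $P\notin V$, then $P\notin Z$ gives $\mathcal I_Z\cdot \mO_{C_P}=\mO_{C_P}$, and $H_f\cap U$ is singular at $P$ precisely when $f\in \mfm^2_{U,P}$, i.e.\ when the image of $f$ in $\mO_{U,P}/\mfm^2_{U,P}$ is zero. If $P\in V_e$, the membership $f\in I_d$ forces the image to lie in the subspace $(\mathcal I_{Z,P}+\mfm^2_{U,P})/\mfm^2_{U,P}$, which has dimension $m-e$ over $\kappa(P)$ by \cref{embeddingdimension}; and since $P\in V\subseteq H_f$ automatically, smoothness at $P$ again reduces to $f\notin \mfm^2_{U,P}$, i.e.\ to the image of $f$ being nonzero. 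In both cases the bad locus is the single element $0$, so by \cref{countingpoints} the fraction of good data at $P$ is $1-q^{-(m+1)\deg P}$ when $P\notin V$ and $1-q^{-(m-e)\deg P}$ when $P\in V_e$.

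Since the connected components of $\widetilde Y$ make the local conditions independent, the number of good tuples equals $\#T$ times the product of these local counts, and dividing by the total cardinality of the right-hand side of $\phi_d$ gives exactly the claimed formula. This ratio is already independent of $d$ for $d\gg 1$, so the limit defining $\mu_Z(\mathcal P_r)$ exists and equals the stated value. The main technical point is the uniform identification of the bad locus with $\{0\}$ in both cases $P\in V$ and $P\notin V$; this is where \cref{embeddingdimension} does the essential work, and without it the product would not split cleanly along the stratification $V=\bigsqcup_e V_e$.
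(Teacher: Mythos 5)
Your proposal is correct and takes essentially the same approach as the paper: sieve on the finite subscheme $Y\cup\bigcup_{P\in U_{<r}}C_P$, invoke \cref{surjective} for the surjectivity of $\phi_d$ when $d\gg 1$, and count good tuples using \cref{countingpoints}. Your explicit verification that the bad locus at each $P$ is exactly $\{0\}$ in both the $P\in V$ and $P\notin V$ cases is a welcome elaboration of a step the paper's proof leaves terse, but it is not a different route.
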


\begin{proof}
Let $U_{<r}=\left\{P_1,\ldots,P_s\right\}$. Let $\mfm_i$ be the ideal sheaf of $P_i$ on $U$ and let $C_i$ be the closed subscheme of $U$ corresponding to the ideal sheaf $\mfm^2_i\subseteq \mO_U$.  Then $H_f\cap U$ is not smooth of dimension $m-1$ at $P_i$ if and only if the restriction of $f$ to a section of $\mathcal I_Z\cdot\mO_{C_i}(d)$ is equal to zero. 
Since we also want $f\big|_Y$ to be in $T$, the set $\mathcal P_r\cap I_d$ is the inverse image of
$$T \times \prod\limits_{i=1}^s(H^0(C_i,\mathcal I_Z\cdot\mO_{C_i})\backslash\left\{0\right\})$$
under the $\F_q$-linear composition
\begin{align*}
\phi_d:I_d&=H^0(\mP^n,\mathcal I_Z(d))\rightarrow H^0(Y\cup C,\mathcal I_Z\cdot \mO_{Y\cup C}(d))\\
&\cong H^0(Y\cup C,\mathcal I_Z\cdot \mO_{Y\cup C})\cong H^0(Y,\mathcal I_Z\cdot \mO_Y)\times \prod\limits_{i=1}^s H^0(C_i,\mathcal I_Z\cdot \mO_{C_i}),
\end{align*}
where $C:=\bigcup\limits_{i=1}^s C_i$. The first isomorphism is the untwisting by multiplication by $x_j^{-d}$ component by component as in the definition of $f\big|_Z$. Note that at this point, we need the restriction $Y\cap Z=\emptyset$. 
For $d$ large enough, the map $\phi_d$ is surjective and it follows that
\begin{align*}
\mu_Z(\mathcal P_r)&=\frac{\#T}{\#H^0(Y,\mathcal I_Z\cdot \mO_Y)}\frac{\#\prod\limits_{i=1}^s(H^0(C_i,\mathcal I_Z\cdot\mO_{C_i})\backslash\left\{0\right\})}{\#\prod\limits_{i=1}^s H^0(C_i,\mathcal I_Z\cdot \mO_{C_i})}.
\end{align*}
Applying \cref{countingpoints} yields the result.
\end{proof}

\begin{cor}
Let $\max\limits_{0\leq e\leq m-1} \left\{e+\dim V_e\right\}< m$ and $V_m=\emptyset$, then
$$\lim\limits_{r\rightarrow \infty} \mu_Z(\mathcal P_r)=\frac{\#T}{\#H^0(Y,\mathcal I_Z\cdot \mO_Y)}\frac{\zeta_V(m+1)}{\zeta_U(m+1) \prod\limits_{e=0}^{m-1} \zeta_{V_e}(m-e)}.$$
\end{cor}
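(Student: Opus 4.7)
The plan is to take the limit $r\to\infty$ in the formula from \cref{smalldegreetaylor} factor by factor, and to recognize each limiting infinite product as the reciprocal of a zeta value. Since by definition $\zeta_X(s)=\prod_{P}(1-q^{-s\deg P})^{-1}$ converges absolutely for $\mathrm{Re}(s)>\dim X$, the partial products $\prod_{P\in X_{<r}}(1-q^{-s\deg P})$ converge to $1/\zeta_X(s)$ as $r\to\infty$ whenever $s>\dim X$.

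First I would handle the factor $\prod_{P\in(U-V)_{<r}}(1-q^{-(m+1)\deg P})$. Since $U-V$ is a locally closed subscheme of $U$ of dimension at most $m$, we have $m+1>\dim(U-V)$, so this partial product converges to $1/\zeta_{U-V}(m+1)$. Because $V$ is closed in $U$, the closed points of $U$ are partitioned into those of $V$ and those of $U-V$, which gives the multiplicative decomposition $\zeta_U(s)=\zeta_V(s)\,\zeta_{U-V}(s)$. Hence
$$\lim_{r\to\infty}\prod_{P\in(U-V)_{<r}}(1-q^{-(m+1)\deg P})=\frac{1}{\zeta_{U-V}(m+1)}=\frac{\zeta_V(m+1)}{\zeta_U(m+1)}.$$

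Next I would deal with the double product $\prod_{e=0}^{m}\prod_{P\in(V_e)_{<r}}(1-q^{-(m-e)\deg P})$. The outer factor for $e=m$ is empty by the assumption $V_m=\emptyset$ and so contributes $1$. For each $0\le e\le m-1$, convergence of $\prod_{P\in(V_e)_{<r}}(1-q^{-(m-e)\deg P})$ to $1/\zeta_{V_e}(m-e)$ requires $m-e>\dim V_e$, which is precisely the hypothesis $e+\dim V_e<m$. Thus each inner partial product converges to $1/\zeta_{V_e}(m-e)$.

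Putting these limits together with the unchanged prefactor $\#T/\#H^0(Y,\mathcal I_Z\cdot \mO_Y)$ from \cref{smalldegreetaylor} yields exactly the asserted formula. No step is really an obstacle here; the only point requiring care is matching the dimension inequalities with the domain of convergence of the relevant zeta functions, which is arranged exactly by the two hypotheses.
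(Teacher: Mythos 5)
Your proof is correct and follows essentially the same approach as the paper: take $r\to\infty$ in Lemma~\ref{smalldegreetaylor} factor by factor, using $m+1>\dim(U-V)$ and $m-e>\dim V_e$ to guarantee convergence of each partial product to the corresponding reciprocal zeta value. You additionally spell out the decomposition $\zeta_U(s)=\zeta_V(s)\zeta_{U-V}(s)$ that turns $1/\zeta_{U-V}(m+1)$ into $\zeta_V(m+1)/\zeta_U(m+1)$, a step the paper leaves implicit, but this is a routine detail and not a different method.
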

\begin{proof}
The first product $\prod\limits_{P\in (U-V)_{<r}}(1-q^{-(m+1)\deg P})$ in \cref{smalldegreetaylor} converges anyway, since $m+1> \dim (U-V)$. For all $0\leq e\leq m-1$, the product $\prod\limits_{P\in (V_e)_{<r}}(1-q^{-(m-e)\deg P})$ is just the partial product used in the definition of the zeta function of $V_e$. This converges for $m-e>\dim V_e$, i.e. for $\dim V_e+e<m$.
\end{proof}

\begin{proof}[Proof of \cref{bertinifqtaylor} (ii)]
The inclusion $\mathcal P\subseteq \mathcal P_r$ implies
$\mu_Z(\mathcal P) \leq \mu_Z(\mathcal P_r)$, and thus is suffices to show that $\mu_Z(\mathcal P_r)=0$. If $e+\dim V_e< m$ fails for some $e$, then the corresponding product in \cref{smalldegreetaylor}, which is the inverse of the partial product defining the zeta function of $V_e$, tends to zero as the zeta function has a pole at $\dim V_e$ (cf. \cite{tate} §4). If $V_m\neq \emptyset$, the factor $(1-q^{-(m-m)\deg P})$ appearing in the density of $\mathcal P_r$ in \cref{smalldegreetaylor} is equal to zero; hence $\mu_Z(\mathcal P_r)$ is zero. 
\end{proof}

\section{Singular points of medium degree}

\begin{lemma}
\label{lemma:mediumdegree}
Let $P\in U$ be a closed point of degree $\leq \frac{d-c}{m+1}$. Then the fraction of polynomials $f\in I_d$ such that $H_f\cap U$ is not smooth of dimension $m-1$ at $P$ is equal to
$$\left\{\begin{array}{cl} q^{-(m+1)\deg P}, & \mbox{ if } P\notin V, \\
q^{-(m-e_V(P))\deg P}, & \mbox{ if } P\in V. \end{array} \right.$$
\end{lemma}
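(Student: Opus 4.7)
The plan is to reduce the lemma to the restriction map into the ``second infinitesimal neighborhood'' at $P$ and then invoke the two lemmas already established in this section.

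First I would set $\mathfrak{m}\subseteq \mathcal{O}_U$ to be the ideal sheaf of $P$ and let $C$ be the closed subscheme of $U$ defined by $\mathfrak{m}^2$, exactly as in \cref{countingpoints}. The standard observation (which is already used implicitly in the proof of \cref{smalldegreetaylor}) is that $H_f\cap U$ is smooth of dimension $m-1$ at $P$ if and only if the image of $f$ in $H^0(C,\mathcal{I}_Z\cdot \mathcal{O}_C(d))$ is nonzero. Indeed, the smoothness condition at $P$ translates into the requirement that the class of $f$ in $\mathfrak{m}_{U,P}/\mathfrak{m}_{U,P}^2$ does not lie in the image of $\mathcal{I}_{Z,P}$, which is precisely the nonvanishing condition on $H^0(C,\mathcal{I}_Z\cdot \mathcal{O}_C(d))$ by \cref{embeddingdimension}. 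Hence the fraction in question is exactly the fraction of $f\in I_d$ sent to $0$ under the restriction map
\[
\phi_d\colon I_d=H^0(\mP^n,\mathcal{I}_Z(d))\longrightarrow H^0(C,\mathcal{I}_Z\cdot \mathcal{O}_C(d)).
\]

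Next I would verify the surjectivity of $\phi_d$ under the degree hypothesis. By \cref{surjective}, $\phi_d$ is surjective as soon as $d\geq c+\dim_{\F_q} H^0(C,\mathcal{I}_Z\cdot \mathcal{O}_C)$. From \cref{countingpoints}, the $\F_q$-dimension of $H^0(C,\mathcal{I}_Z\cdot\mathcal{O}_C)$ is at most $(m+1)\deg P$ in both cases ($P\in V$ or $P\notin V$). Thus the assumption $\deg P \leq (d-c)/(m+1)$, i.e. $d\geq c+(m+1)\deg P$, guarantees surjectivity.

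With $\phi_d$ surjective and $\F_q$-linear, the fraction of $f\in I_d$ with $\phi_d(f)=0$ equals $1/\#H^0(C,\mathcal{I}_Z\cdot\mathcal{O}_C(d))$. Since $C$ is finite, we may drop the twist, and \cref{countingpoints} evaluates this cardinality to $q^{(m+1)\deg P}$ if $P\notin V$ and to $q^{(m-e_V(P))\deg P}$ if $P\in V$, which are the stated values. The only point that requires care is the bound on $\dim_{\F_q} H^0(C,\mathcal{I}_Z\cdot \mathcal{O}_C)$ needed to apply \cref{surjective}, but this is immediate from \cref{countingpoints}; there is no real obstacle, the lemma follows by direct assembly of the already-established tools.
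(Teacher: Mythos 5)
Your proof is correct and follows essentially the same route as the paper's: restrict $f$ to the second infinitesimal neighborhood $C$ of $P$, observe that non-smoothness at $P$ corresponds exactly to $\phi_d(f)=0$, invoke \cref{surjective} for surjectivity of $\phi_d$ under the degree bound, and then read off the count from \cref{countingpoints}. You even spell out the check that $d\geq c+(m+1)\deg P\geq c+\dim_{\F_q}H^0(C,\mathcal I_Z\cdot\mO_C)$ justifies surjectivity, which the paper leaves implicit.
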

\begin{proof}
Let $C\subseteq U$ be defined as in \cref{countingpoints}. Then $H_f\cap U$ is not smooth of dimension $m-1$ at $P$ if and only if the restriction of $f$ to a section of $\mathcal I_Z\cdot\mathcal O_C(d)$ is equal to zero. 
As we have an isomorphism $H^0(\mP^n,\mathcal I_Z(d))/\ker \phi_d \cong H^0(C,\mathcal I_Z\cdot \mO_C(d))$ by \cref{surjective}, \cref{countingpoints} applied to $C$ gives the fractions. 
\end{proof}

\begin{lemma}[Singularities of medium degree]
\label{mediumdegree}
Let
\begin{align*}
\mathcal Q_r^{\medium} :=\bigcup_{d\geq 0} \{ f\in I_d:& \; \mbox{there exists a point } P\in U \mbox{ with } r\leq \deg P\leq \frac{d-c}{m+1} \mbox{ such }\\
&\;\mbox{that } H_f\cap U \mbox{ is not smooth of dimension } m-1 \mbox{ at } P\}.
\end{align*}
Then $\lim\limits_{r \rightarrow \infty}\overline{\mu}_Z(\mathcal Q_r^{\medium})=0$.
\end{lemma}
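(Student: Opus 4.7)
The plan is to pass from $\mathcal Q_r^{\medium}$ to a union bound over the closed points of medium degree, control each single-point contribution via \cref{lemma:mediumdegree}, and then stratify by the embedding dimension $e_V(P)$ so that the hypothesis of case (i) of \cref{bertinifqtaylor}, namely $\max_{0\leq e\leq m-1}\{e+\dim V_e\}<m$ and $V_m=\emptyset$, turns each stratum into a geometric tail in the degree that vanishes as $r\to\infty$.

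Concretely, for fixed $d$ I would estimate
$$\frac{\#(\mathcal Q_r^{\medium}\cap I_d)}{\#I_d}\;\leq\sum_{\substack{P\in U\\ r\leq \deg P\leq (d-c)/(m+1)}}\frac{\#\{f\in I_d:H_f\cap U\text{ not smooth at }P\}}{\#I_d},$$
where by \cref{lemma:mediumdegree} each summand equals $q^{-(m+1)\deg P}$ if $P\notin V$ and $q^{-(m-e_V(P))\deg P}$ if $P\in V$. I then partition the index set into $(U-V)_{\geq r}$ and the locally closed strata $(V_e)_{\geq r}$ for $0\leq e\leq m-1$ (using $V_m=\emptyset$), and invoke the standard bound $\#W_n\leq c_W\,q^{n\dim W}/n$ on the number of closed points of degree exactly $n$ on a finite-type $\F_q$-scheme $W$ (which follows from $n\cdot \#W_n\leq \#W(\F_{q^n})$ together with Lang--Weil). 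This yields an upper bound
$$\sum_{n\geq r}\frac{c_{U-V}}{n}\,q^{-n}\;+\;\sum_{e=0}^{m-1}\sum_{n\geq r}\frac{c_{V_e}}{n}\,q^{n(\dim V_e-(m-e))},$$
which is independent of $d$.

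The first summand is a convergent geometric-type tail since $\dim(U-V)\leq m$ gives exponent $\leq -1$. In the double sum, the inner exponent $\dim V_e-(m-e)$ is strictly negative, precisely by the hypothesis $e+\dim V_e<m$, so each inner tail also vanishes as $r\to\infty$; the outer sum over $e$ is finite, so taking $\limsup_d$ and then $\lim_r$ delivers the claim. The main subtlety is that the per-point formula of \cref{lemma:mediumdegree} relies on surjectivity of $\phi_d$, which is why the cutoff $\deg P\leq (d-c)/(m+1)$ is built into the definition of $\mathcal Q_r^{\medium}$ (via \cref{surjective}); this is what makes the bound genuinely uniform in $d$ and reduces the whole estimate to the convergence of a handful of zeta-type tails.
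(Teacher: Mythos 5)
Your proposal is correct and follows essentially the same route as the paper: a union bound over closed points of medium degree, the per-point probability from \cref{lemma:mediumdegree} (whose applicability is exactly why the cutoff $\deg P\leq(d-c)/(m+1)$ appears, via \cref{surjective}), a stratification into $(U-V)$ and the $V_e$ for $0\leq e\leq m-1$ using $V_m=\emptyset$, and Lang--Weil to turn each stratum into a geometric tail that vanishes as $r\to\infty$ because $\dim V_e-(m-e)<0$. The only cosmetic difference is your extra factor $1/n$ from the refinement $n\cdot\#W_n\leq\#W(\F_{q^n})$, whereas the paper simply bounds the number of degree-$g$ closed points by $\#W(\F_{q^g})$; both give the same $O(q^{-r})$ conclusion.
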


\begin{proof}
Since the number of points $P$ of degree $g$ in $U$ is at most $\#U(\F_{q^g})$, \cref{lemma:mediumdegree} yields
\begin{align*}
\frac{\#(\mathcal Q_r^{\medium}\cap I_d)}{\#I_d}\leq \sum\limits_{e=0}^m \sum\limits_{g=r}^{\infty} \#V_e(\F_{q^g}) \;q^{-(m-e)g} + \sum\limits_{g=r}^{\infty}\#(U-V)(\F_{q^g})\;q^{-(m+1)g}.
\end{align*}
By (\cite{langweil} Lemma $1$), there exist constants $a_e$ and $a$ for $V_e$ and $U-V$ that depend only on $V_e$ and $U-V$, respectively, such that $\#V_e(\F_{q^g})\leq a_e q^{g\dim V_e}$ and $\#(U-V)(\F_{q^g})\leq a q^{g\dim(U-V)}$. Using the assumptions $\max\limits_{0\leq e\leq m-1} \left\{e+\dim V_e\right\}< m$ and $V_m=\emptyset$, we obtain 
\begin{align*}
\frac{\#(\mathcal Q_r^{\medium}\cap I_d)}{\#I_d}\leq O(q^{-r}),
\end{align*}
which tends to zero for $r\rightarrow \infty$.
\end{proof}

\section{Singular points of high degree}

\begin{lemma}[Singularities of high degree off $V$]
\label{highdegreeoffv}
Define
\begin{align*}
\mathcal Q_{U-V}^{\high}:=\bigcup\limits_{d\geq 0}\; \{\;f\in I_d: &\;\mbox{there exists a point } P\in (U-V)_{>\frac{d-c}{m+1}} \mbox{ such that  } H_f\cap U \\
& \; \mbox{is not smooth of dimension } m-1 \mbox{ at } P\}.
\end{align*}
Then $\overline{\mu}_Z(\mathcal Q^{\high}_{U-V})=0$.
\end{lemma}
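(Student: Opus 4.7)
The plan is to imitate Poonen's high-degree closed-point sieve (\cite{poonen}, Lemma~2.6) essentially verbatim, adapted so that $f$ is restricted to lie in $I_d$ and the sieve runs only over $P\in U-V$. The geometric simplification is that for $P\in U-V$ we have $P\notin Z$, so the ideal sheaf $\mathcal I_Z$ agrees with $\mO_{\mP^n}$ locally at $P$ and elements of $I_d$ are subject to no local vanishing constraint at $P$; hence the singularity analysis at such $P$ coincides with Poonen's classical setting. Specifically, $H_f\cap U$ fails to be smooth of dimension $m-1$ at $P$ iff the dehomogenization $F=f/x_j^d$ (in a chart with $x_j(P)\neq 0$) satisfies $F(P)=0$ and $D_kF(P)=0$ for $k=1,\dots,m$, where $D_1,\dots,D_m$ are the derivations dual to a regular system of parameters $t_1,\dots,t_m$ for $U$ at $P$.

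\textbf{Poonen's decomposition.} Cover $U$ by finitely many affine opens on which such local parameters and derivations exist; it suffices to treat one chart at a time. For $d\gg 0$, set $\tau=\lfloor(d-\eta)/p\rfloor$ for a fixed $\eta\geq c$, and parameterize a subset of $I_d$ as
$$f=f_0+\sum_{i=0}^m g_i^p\,h_i,$$
with $g_i\in S_\tau$, $h_0,\dots,h_m\in I_\eta$ fixed auxiliary polynomials (depending on $P$), and $f_0$ ranging over an auxiliary complement inside $I_d$. Each summand lies in $I_{p\tau+\eta}\subseteq I_d$ (after multiplying by a monomial in the distinguished coordinate $x_{j(\alpha)}$ of bounded degree so that degrees match), so the parameterization stays inside $I_d$. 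By \cref{surjective} the fibres of this parameterization are equi-cardinal for $d$ large, so counting bad $f\in I_d$ reduces to counting bad parameter tuples. Because we are in characteristic $p$, $D_k(g_i^p h_i)=g_i^p\,D_k h_i$, so the singularity conditions at $P$ become the affine-linear system
$$f_0(P)+\sum_i g_i(P)^p h_i(P)=0,\qquad D_k f_0(P)+\sum_i g_i(P)^p D_k h_i(P)=0,$$
for $k=1,\dots,m$. Choose the $h_i$ so that the resulting $(m+1)\times(m+1)$ matrix of their values and first partials at $P$ over $\kappa(P)$ is as non-degenerate as possible; this is available because $P\notin Z$ makes $I_\eta$ unconstrained at $P$, so by \cref{surjective} the restriction $I_\eta\to\mO_{U,P}/\mfm_P^2$ is surjective for $\eta$ large enough. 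This bounds the fraction of bad parameter tuples at a fixed $P$ of degree $g$ by at most $q^{-(m+1)g}$, up to an absolute multiplicative constant.

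\textbf{Summation.} By Lang--Weil (\cite{langweil}, Lemma~1) there is a constant $a$ such that $\#(U-V)(\F_{q^g})\leq a\,q^{gm}$ for every $g\geq 1$. Summing the pointwise bound over closed points of $U-V$ of each degree $g>(d-c)/(m+1)$,
$$\frac{\#(\mathcal Q^{\high}_{U-V}\cap I_d)}{\#I_d}\ \leq\ C\!\sum_{g>(d-c)/(m+1)} a\,q^{gm}\cdot q^{-(m+1)g}\ =\ O\!\bigl(q^{-(d-c)/(m+1)}\bigr),$$
which tends to $0$ as $d\to\infty$, yielding $\overline{\mu}_Z(\mathcal Q^{\high}_{U-V})=0$. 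The main technical obstacle is the decomposition step: one must balance $\eta$ (large enough for suitable $h_i\in I_\eta$ with the desired values and first partials at $P$ to exist) against $\tau$ (large enough so that $S_\tau^{m+1}$ supplies enough freedom), and for points $P$ with $\deg P$ close to the threshold $(d-c)/(m+1)$ one has to iterate Poonen's trick by pulling out further $p^j$-th powers and exploiting that $D_k^{p^j}$ acts linearly on them. The hypothesis $P\notin V$ is essential throughout, since it removes all local vanishing constraints from $\mathcal I_Z$ on the auxiliary polynomials near $P$.
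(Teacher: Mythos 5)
The paper disposes of this lemma in one line, by observing that Lemma~4.2 of \cite{poonensubscheme} already gives it (and that Poonen's proof of that lemma never uses smoothness of $V$); the underlying argument is Poonen's inductive ``dimension-drop'' sieve from \cite{poonen}, Lemma~2.6. Your write-up has the right ingredients on the surface (the decomposition $f=f_0+\sum g_i^p h_i$, the Lang--Weil count, the observation that $P\notin Z$ removes local constraints), but the execution in your ``Summation'' step is not correct, and this is not a minor technicality---it is exactly the point the high-degree lemma is designed to handle.

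The gap: you claim that the fraction of bad parameter tuples at a fixed $P$ of degree $g$ is $\leq q^{-(m+1)g}$ (up to a constant) and then sum over all high-degree closed points. But this pointwise bound is only available when the restriction map $I_d\to\mO_{U,P}/\mfm_{U,P}^2$ is (essentially) surjective, which by \cref{surjective} requires $d\geq c+(m+1)\deg P$, i.e.\ $\deg P\leq (d-c)/(m+1)$. For the points this lemma is about, namely $\deg P>(d-c)/(m+1)$, surjectivity can fail: the image of $I_d$ in $\mO_{U,P}/\mfm_{U,P}^2$ may be a proper subspace, in which case the bad event has probability either $0$ or strictly larger than $q^{-(m+1)\deg P}$ depending on where the target lies. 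A naive union bound over high-degree points therefore does not close. (This is precisely why the ``medium degree'' range in \cref{lemma:mediumdegree} and \cref{mediumdegree} stops at $\deg P\leq(d-c)/(m+1)$, and why Poonen needs a separate argument above that threshold.) Your final paragraph even gestures at this obstacle (``iterate Poonen's trick by pulling out further $p^j$-th powers'') but does not resolve it; as written, the summation step is unjustified.

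What the actual proof does instead: rather than bounding the probability of failure at each $P$ and summing, one selects $f_0,g_0,\dots,g_m$ one at a time and shows inductively that with probability $1-o(1)$ the successive loci $W_i=(U-V)\cap\{D_1f=\dots=D_if=0\}$ drop in dimension ($\dim W_i\leq m-i$), until $W_m$ is finite; the last step then shows a finite set of bounded complexity is unlikely to contain any closed point of degree $>(d-c)/(m+1)$. This avoids any pointwise probability estimate at a single high-degree point. (The paper actually writes out this induction in its proof of \cref{highdegreeonv}, the ``on $V$'' analogue.) To repair your argument you would need to replace the union-bound paragraph with this dimension-drop induction, or else carefully justify a pointwise bound in the non-surjective range, which is substantially harder and not what Poonen does.
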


\begin{proof}
This is the statement of Lemma 4.2. in \cite{poonensubscheme}; the proof does not use the fact that $V$ is smooth.
\end{proof}

\begin{lemma}[Singularities of high degree on $V$]
\label{highdegreeonv}
Define
\begin{align*}
\mathcal Q_{V}^{\high}:=\bigcup\limits_{d\geq 0}\; \{\;f\in I_d: &\;\mbox{there exists a point } P\in V_{>\frac{d-c}{m+1}} \mbox{ such that } H_f\cap U \mbox{ is not }\\
&\;\mbox{smooth of dimension } m-1 \mbox{ at } P\}.
\end{align*}
Then $\overline{\mu}_Z(\mathcal Q^{\high}_{V})=0$.
\end{lemma}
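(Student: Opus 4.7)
The plan is to follow the closed-point sieve of Poonen, stratified by the embedding-dimension strata. Since $V_m = \emptyset$ the decomposition $V = \bigsqcup_{e=0}^{m-1} V_e$ yields
$$\mathcal Q_V^{\high} \;\subseteq\; \bigcup_{e=0}^{m-1} \mathcal Q_{V_e}^{\high},$$
where $\mathcal Q_{V_e}^{\high}$ is defined exactly as $\mathcal Q_V^{\high}$ but with the offending point $P$ required to lie in $V_e$. It therefore suffices to prove $\overline{\mu}_Z(\mathcal Q_{V_e}^{\high}) = 0$ for each fixed $e \in \{0, \dots, m-1\}$; here the hypothesis of \cref{bertinifqtaylor}(i) reads $\dim V_e < m - e$.

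Fix such an $e$. By \cref{embeddingdimension} and Nakayama, the stalk $\mathcal I_Z \cdot \mO_{U,P}$ at any $P \in V_e$ is generated by exactly $m-e$ elements. I would cover $V_e$ by finitely many affine open subsets $U_\alpha \subseteq U$ admitting regular functions $t_1, \dots, t_m$ on $U_\alpha$ with $dt_1, \dots, dt_m$ trivializing $\Omega^1_{U/\F_q}|_{U_\alpha}$ and such that $t_1, \dots, t_{m-e}$ generate $\mathcal I_Z \cdot \mO_U$ in a neighborhood of $V_e \cap U_\alpha$; such a cover exists after sufficient shrinking since $U$ is smooth and $V_e$ is quasi-compact. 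On one such patch I would run Poonen's decomposition: for $d$ sufficiently large, every $f \in I_d$ admits a non-unique expression
$$f = f_0 + \sum_{i=1}^{m-e} g_i^{\,p}\, h_i,$$
with $f_0 \in I_{d_0}$ and $g_i, h_i \in S_{\homog}$ of suitable degrees, the $g_i$ chosen so that $g_i/x_j^{\deg g_i}$ is a unit on $U_\alpha$ for an appropriate $j$. The key local calculation, exactly as in the proof of Lemma 5.1 of \cite{poonensubscheme}, is that for fixed $f_0$ and $h_1, \dots, h_{m-e-1}$, non-smoothness of $H_f \cap U$ at a prescribed $P \in V_e \cap U_\alpha$ forces $h_{m-e}(P) \in \kappa(P)$ into a proper affine $\kappa(P)$-subspace, bounding the local bad fraction at $P$ by $q^{-\deg P}$; iterating over the $m-e$ parameters yields $q^{-(m-e)\deg P}$.

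Summing the local bound over $P \in (V_e \cap U_\alpha)_{>\frac{d-c}{m+1}}$, using the Lang--Weil bound $\#V_e(\F_{q^g}) \le a_e\, q^{g \dim V_e}$ as in the proof of \cref{mediumdegree}, gives
$$\frac{\#(\mathcal Q_{V_e,\alpha}^{\high} \cap I_d)}{\#I_d} \; \le\; \sum_{g > (d-c)/(m+1)} a_e\, q^{(\dim V_e - (m-e))\,g},$$
which tends to zero as $d \to \infty$ because the exponent is negative. Taking the finite union over $\alpha$ and $e$ concludes.

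The main obstacle is arranging the Poonen decomposition so that $f_0$ and the $h_i$ vary independently while jointly surjecting onto $I_d$ for $d$ large (a surjectivity statement analogous to \cref{surjective}, applied to the first-order thickening of $P$ combined with the finite scheme $Y$), together with selecting the auxiliary $g_i$ whose $p$-th powers activate exactly the $m-e$ normal derivatives at $P$ without perturbing the tangential ones. This bookkeeping is essentially the content of \S5 of \cite{poonensubscheme} for the smooth $V$ case; the present setting requires only that it be carried out stratum by stratum, exploiting that $V_e$ has at each of its points the expected codimension $m-e$ in $U$ by the very definition of the flattening stratification.
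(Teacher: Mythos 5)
The central difficulty your proposal skirts is precisely what forces the paper (and Poonen) to use a different argument for high-degree points than for medium-degree ones. You bound the fraction of $f \in I_d$ singular at a \emph{fixed} $P \in V_e$ by $q^{-(m-e)\deg P}$ and then sum over all high-degree $P$ via Lang--Weil. But that per-point bound is exactly the medium-degree computation (\cref{lemma:mediumdegree}/\cref{mediumdegree}), and it is valid only when $\deg P \leq (d-c)/(m+1)$, because it relies on the surjectivity of the restriction map $I_d \to H^0(C, \mathcal I_Z\cdot\mO_C)$ from \cref{surjective}. For $\deg P > (d-c)/(m+1)$ that map need not be surjective, its image can be a proper subspace of $H^0(C,\mathcal I_Z\cdot\mO_C)$, and the fraction of bad $f$ at $P$ is then $q^{-\dim(\mathrm{image})} \geq q^{-(m-e)\deg P}$ and can be as large as $1$. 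There is no version of the decomposition trick that recovers a uniform per-point bound at high degree either: the random polynomials in the decomposition have degree roughly $d/p$, while $\deg P > d/(m+1)$, so for $p \geq m+1$ their evaluations at $P$ are not uniformly distributed over $\kappa(P)$ and do not yield the factor $q^{-\deg P}$ per derivative.

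The paper's proof instead runs Poonen's dimension-reducing sieve (his Lemma 2.6): fix the local trivialization $t_1,\dots,t_m$, take only $l_e+1$ auxiliary polynomials (not $m-e$; $l_e+1 \leq m-e$ suffices and is what the hypothesis $e + l_e < m$ guarantees), write $f = f_0 + g_1^p t_1 + \cdots + g_{l_e+1}^p t_{l_e+1}$ with $g_i$ random, and argue that with probability $1-o(1)$ each successive condition $D_i f = 0$ drops the dimension of $W_i := V_e \cap \{D_1 f = \cdots = D_i f = 0\}$ by one, so that $W_{l_e}$ is finite and then $W_{l_e+1} \cap V_{e,>\frac{d-c}{m+1}}$ is empty. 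This is a global statement about the singular locus, not a point-by-point probability estimate, and it is what circumvents the failure of surjectivity. (A smaller issue: in your decomposition $f = f_0 + \sum g_i^p h_i$ with $g_i$ fixed and $h_i$ random, the derivatives $D_j f = D_j f_0 + \sum_i g_i^p D_j h_i$ do not decouple, since a single random $h_i$ contributes to all $D_j f$; you need the fixed factors $t_i$ to satisfy $D_j t_i = s\,\delta_{ij}$ so that each random $g_i$ controls exactly one derivative.)
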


\begin{proof}
If the lemma is proven for all subsets $U_i$ of a finite open cover of $U$, then it holds for $U$ as well. Hence we can assume without loss of generality, that $U\subseteq \A^n_{\F_q}=\left\{x_0\neq 0\right\}\subseteq \mP^n_{\F_q}$ is affine. We identify $S_d$ with the space of polynomials $S_d'\subseteq \mathbb F_q[x_1,\ldots,x_n]=A$ of degree $\leq d$ by setting $x_0=1$. This dehomogenization also identifies $I_d$ with a subspace $I_d'\subseteq S_d'$.

Let $P$ be a closed point of $U$. Since $U$ is smooth, we can choose a system of local parameters $t_1,\ldots, t_n\in A$ on $\mathbb A^n$ such that $t_{m+1}=\ldots=t_n=0$ defines $U$ locally at $P$. Then $dt_1,\ldots,dt_n$ are a basis for the stalk of $\Omega^1_{\A^n|\F_q}$ at $P$ and $dt_1,\ldots,dt_m$ are a basis for the stalk of $\Omega^1_{U|\F_q}$ at $P$. We will show that the probability that $H_f \cap U$ is not smooth at a point in $V_e$ tends to zero for $d\rightarrow \infty$ and any $e$.

Consider the map $\Omega^1_{U|\F_q}\otimes \mO_{V}\rightarrow \Omega^1_{V|\F_q}$, which is surjective (\cite{hartshorne} Proposition II.8.12). Tensoring it with $\mO_{V_e}$ gives a surjective map $\phi:\Omega^1_{U|\F_q}\otimes \mO_{V_e}\rightarrow \Omega^1_{V|\F_q}\otimes \mO_{V_e}$, where $\Omega^1_{U|\F_q}\otimes \mO_{V_e}$ and $\Omega^1_{V|\F_q}\otimes \mO_{V_e}$ are locally free sheaves of rank $m$ and $e$, resp. Hence, $dt_1,\ldots,dt_{m-e}$ form a basis of the kernel of $\phi$ at $P$ and $dt_{m-e+1},\ldots,dt_{m}$ a basis of $\Omega^1_{V|\F_q,P}\otimes \mO_{V_e,P}$. In particular, $t_1,\ldots, t_{m-e}$ all vanish on V and we may assume that they vanish even on $Z$, as by definition $V=Z\cap X$.

Let $\partial_1,\ldots,\partial_n \in \mathcal T_{\A^n|\F_q,P}$ be the basis of the stalk of the tangent sheaf which is dual to $dt_1,\ldots,dt_n$. We can find an $s\in A$ with $s(P)\neq 0$ such that $D_i=s\partial_i$ gives a global derivation $A\rightarrow A$ for $i=1,\ldots,n$. 
There exists a neighbourhood $N_P$ of $P$ in $\A^n$ such that $N_P\cap U=N_P\cap \left\{t_{m+1}=\ldots=t_n=0\right\}$, $\Omega^1_{\A^n|\F_q}\big|_{N_P}=\bigoplus\limits_{i=1}^n\mO_{N_P}dt_i$ and $s\in \mO(N_P)^*$. As $U$ is quasi-compact, we can cover $U$ with finitely many $N_P$ and assume $U\subseteq N_P$. Hence in particular, $\Omega^1_{U|\F_q}=\bigoplus\limits_{i=1}^m \mO_U dt_i$.

Let $P\in V_e$ be a closed point. For a polynomial $f\in I_d$, $H_f\cap U$ is not smooth at $P$ if and only if $(D_1f)(P)=\ldots=(D_m f)(P)=0$. Note that we do not have to require $f(P)$ to be zero, since $Z$ is contained in the hypersurface $H_f$ for $f\in I'_d$, and thus $f$ vanishes at all points in $V_e\subseteq Z$ anyway.

Let $\tau =\max\limits_{1\leq i\leq {l_e}+1} (\deg t_i)$ and $\gamma=\lfloor(d-\tau)/p\rfloor$, where $l_e=\dim V_e$. We select $f_0\in I'_d$ and $g_1\in S'_{\gamma},\ldots,g_{l_e+1}\in S'_{\gamma}$ uniformly and independently at random. Then the distribution of 
$$f=f_0+g_1^pt_1 + \ldots +g_{l_e+1}^p t_{l_e+1}$$
is uniform over $I'_d$. Note that by our assumption we have $e+l_e<m$ and since $t_1,\ldots, t_{m-e}$ vanish on $Z$, we get $t_1,\ldots, t_{l_e+1} \in I'_d$.

Since the distribution of the polynomials $f$ in this representation is uniform over $I'_d$, it is enough to bound the probability for an $f$ constructed in this way to have a point $P\in V_{e,>\frac{d-c}{m+1}}$ such that $(D_1 f)(P)=\ldots=(D_{m} f)(P)=0$. Here we are using the construction above because by definition, the partial derivatives $D_if=D_i f_0+g_i^p s$ are independent of one another. We will select the polynomials $f_0,g_1,\ldots,g_{l_e+1}$ one at a time.

For $0\leq i\leq l_e+1$, define
$$ W_i=V_e\cap \left\{D_1f=\ldots =D_if=0\right\}.$$
Then $W_{l_e+1}\cap V_{e,>\frac{d-c}{m+1}}$ is the set of points $P\in V_e$ of degree $>\frac{d-c}{m+1}$ where $H_f\cap U$ may be singular. Using the induction argument of Lemma 2.6 of \cite{poonen}, one can show that if the polynomials $f_0,g_1,\ldots,g_i$ for $0\leq i\leq l_e$ have been chosen such that $\dim(W_i)\leq l_e-i$ holds, then the probability for $\dim(W_{i+1})\leq l_e-i-1$ is equal to $1-o(1)$ as $d\rightarrow \infty$, and conditioned on a choice of $f_0,g_1,\ldots, g_{l_e}$ for which $W_{l_e}$ is finite, the probability for $W_{l_e+1}\cap V_{e,>\frac{d-c}{m+1}}$ to be empty is equal to $1-o(1)$ as $d\rightarrow \infty$. Hence, the probability for a polynomial $f$ to have a point $P\in V_{e,>\frac{d-c}{m+1}}$ such that $H_f\cap U$ is not smooth at $P$ is 0. The upper density $\overline{\mu}_Z(\mathcal Q_V^{\high})$, that we actually want to calculate, is a finite sum of those probabilities, and hence zero.
\end{proof}

\begin{proof}[Proof of \cref{bertinifqtaylor}]
By definition,
$\mathcal P \subseteq \mathcal P_r \subseteq \mathcal P \cup \mathcal Q_r^{\medium} \cup \mathcal Q_{U-V}^{\high}\cup \mathcal Q_V^{\high}.$
Thus $\overline{\mu}(\mathcal P)$ and $\underline{\mu}(\mathcal P)$ differ from $\mu(\mathcal P_r)$ at most by $\overline{\mu}_Z(\mathcal Q_r^{\medium})+ \overline{\mu}_Z(\mathcal Q_{U-V}^{\high}) + \overline{\mu}_Z(\mathcal Q_V^{\high})$. Using \ref{mediumdegree}, \ref{highdegreeoffv} and \ref{highdegreeonv} for singularities of medium and high degrees, we get 
\begin{align*}
\mu_Z(\mathcal P)=\lim\limits_{r\rightarrow \infty}\mu_Z(\mathcal P_r)=\frac{\#T}{\#H^0(Y,\mO_Y)} \frac{\zeta_V(m+1)}{\zeta_U(m+1) \prod\limits_{e=0}^{m-1} \zeta_{V_e}(m-e)}.
\end{align*}
\end{proof}

\end{document}